\documentclass[11 pt]{article}
\usepackage[margin=2.5cm]{geometry}
\usepackage{amsmath,amsthm,amsfonts,amssymb}

\usepackage{graphicx,latexsym}
\usepackage{lscape}
\usepackage[usenames,dvipsnames]{color}
\def\keyw{\par\medskip\noindent {\it \textbf{Keywords}:}\enspace\ignorespaces}
\newtheorem{thm}{Theorem}[section]
 \newtheorem{cor}[thm]{Corollary}
 \newtheorem{lem}[thm]{Lemma}

 \newtheorem{defn}[thm]{Definition}

\numberwithin{equation}{section}

\newtheorem{discu}{Discussion:}

\newtheorem{conje}{Conjecture:}

\begin{document}
\date{}

\title{Embedding onto Wheel-like Networks}

\author{
R. Sundara\ Rajan $^{a}$
\and
T.M. Rajalaxmi $^{b}$
\and
Sudeep Stephen $^{c}$
\and
A. Arul Shantrinal $^a$
\and
K. Jagadeesh Kumar $^{a}$
}
\date{}

\maketitle
\vspace{-0.8 cm}
\begin{center}
$^a$ Department of Mathematics, Hindustan Institute of Technology and Science, Chennai, \\ India, 603 103\\
{\tt vprsundar@gmail.com} ~~~~~{\tt shandrinashan@gmail.com}  ~~~~~{\tt jagadeeshgraphs@gmail.com}\\
\medskip

$^b$ Department of Mathematics, SSN College of Engineering, Chennai, India, 603 110\\
{\tt laxmi.raji18@gmail.com}\\
\medskip

$^c$ Department of Computer Science, National University of Singapore, 13 Computing Drive, Singapore 117417\\
{\tt sudeep.stephen@nus.edu.sg}\\
\end{center}

\begin{abstract}
One of the important features of an interconnection network is its ability to efficiently simulate programs or parallel algorithms written for other architectures. Such a simulation problem can be mathematically formulated as a graph embedding problem. In this paper we compute the lower bound for dilation and congestion of embedding onto wheel-like networks. Further, we compute the exact dilation of embedding wheel-like networks into hypertrees, proving that the lower bound obtained is sharp. Again, we compute the exact congestion of embedding windmill graphs into circulant graphs, proving that the lower bound obtained is sharp. Further, we compute the exact wirelength of embedding wheels and fans into 1,2-fault hamiltonian graphs. Using this we estimate the exact
wirelength of embedding wheels and fans into circulant graphs, generalized Petersen graphs, augmented cubes, crossed cubes, M\"{o}bius cubes, twisted cubes, twisted $n$-cubes, locally twisted cubes, generalized twisted cubes, odd-dimensional cube connected cycle, hierarchical cubic networks, alternating group graphs, arrangement graphs, 3-regular planer hamiltonian graphs, star graphs, generalised matching networks, fully connected cubic networks, tori and 1-fault traceable graphs.
\end{abstract}

\keyw{Embedding, dilation, congestion, wirelength, wheel, fan, friendship graph, star, median, hamiltonian}

\section{Introduction}
Graph embedding is a powerful method in parallel computing that maps a guest network $G$ into a host network $H$ (usually an interconnection network). A graph embedding has a lot of applications, such as processor allocation, architecture simulation, VLSI chip design, data structures and data representations, networks for parallel computer systems, biological models that deal with visual stimuli, cloning and so on \cite{FaLiJi05,KuCh17,Xu13,RaRaLiSe18}.

The performance of an embedding can be evaluated by certain cost criteria, namely the dilation, the edge congestion and the wirelength. The \textit{dilation} of an embedding is defined as the maximum distance between pairs of vertices of host graph that are images of adjacent vertices of the guest graph. It is a measure for the communication time needed when simulating one network on another. The \textit{congestion} of an embedding is the maximum number of edges of the guest graph that are embedded on any single edge of the host graph. An embedding with a large congestion faces
many problems, such as long communication delay, circuit switching and the existence of different types of uncontrolled noise. The \textit{wirelength} of an embedding is the sum of the dilations in host graph of edges in guest graph {\rm \cite{Xu13,LaWi99}}.


Ring or path embedding in interconnection networks
is closely related to the hamiltonian problem [6--9]
which is one of the well known NP-complete problems in
graph theory. If an interconnection network has a hamiltonian
cycle or a hamiltonian path, ring or linear array can
be embedded in this network. Embedding of linear arrays
and rings into a faulty interconnection network is one
of the central issues in parallel processing. The problem
is modeled as finding fault-free paths and cycles
of maximum length in the graph \cite{PaLiKi07}.

The wheel-like networks plays an important role in the circuit layout and interconnection
network designs. Embedding of wheels and fans in interconnection networks is closely related to 1-fault hamiltonian problem. A graph $G$ is called $f$-fault hamiltonian if there
is a cycle which contains all the non-faulty vertices and
contains only non-faulty edges when there are $f$ or less
faulty vertices and/or edges. Similarly, a graph $G$ is called
$f$-fault traceable if for each pair of vertices $u$ and $v$, there
is a path from $u$ to $v$ which contains all the non-faulty vertices
and contains only non-faulty edges when there are
$f$ or less faulty vertices and/or edges. We note that if a
graph $G$ is hypohamiltonian, hyperhamiltonian or almost
pancyclic then it is 1-fault hamiltonian \cite{ArMaRaRa11} and it has been well studied in \cite{ChTsHsTa04,ArMaRaRa11,TsLi08}.

The rest of the paper is organized as follows: Section 2 gives definitions and other preliminaries. In Section 3, we compute the dilation, congestion and wirelength of embedding onto wheel-like networks. Finally, concluding remarks and future works are given in Section 4.

\section{Preliminaries}
In this section we give basic definitions and preliminaries related to embedding problems.

\begin{defn}{\rm \cite{BeChHaRoSc98}}
Let $G$ and $H$ be finite graphs. An \textit{embedding} of $G$ into $H$ is a pair $(f,P_f)$
defined as follows:
\vspace{-0.1 cm}
\begin{enumerate}
\item $f$ is a one-to-one map: $V(G)\rightarrow V(H)$
\item $P_f$ is a one-to-one map from $E(G)$ to $%
\{P_{f}(e):P_{f}(e)$ is a path in $H$ between $f(u)$ and $%
f(v)$ for $e=(uv)\in E(G)\}.$
\end{enumerate}
\end{defn}

By abuse of language we will also refer to an embedding $(f,P_f)$ simply by $f$. The \textit{expansion} of an embedding $f$ is the ratio of the number of vertices of $H$ to the number of vertices of $G$. In this paper, we consider embeddings with expansion one.

\begin{defn}{\rm \cite{BeChHaRoSc98}}
Let $f$ be an embedding of $G$ into $H$. If $e=(uv)\in E(G)$, then the length of $P_{f}(e)$ in $H$ is called
the \textit{dilation} of the edge $e$ denoted by $dil_f(e)$. Then
$$dil(G,H)=\underset{f:G\rightarrow H}\min ~~\underset{e\in E(G)}\max ~dil_f(e).$$
\end{defn}

\begin{defn}{\rm \cite{BeChHaRoSc98}}
Let $f$ be an embedding of $G$ into $H$. For $e\in E(H)$, let $EC_{f}(e)$ denotes the number of edges $(xy)$ of $G$
such that $e$ is in the path $P_{f}(xy)$ between $f(x)$ and $f(y)$ in $H$. In other words,
$EC_{f}(e)=\left\vert \left\{ (xy)\in E(G):e\in P_{f}(xy)\right\} \right\vert.$ Then
$$EC(G,H)=\underset{f:G\rightarrow H}\min ~~\underset{e\in E(H)}\max ~EC_{f}(e).$$
\end{defn}
%
\noindent Further, if $S$ is any subset of $E(H)$, then we define $EC_{f}(S)=\underset{e\in S}{\sum }EC_{f}(e)$.

\begin{defn}{\rm \cite{MaRaRaMe09}}\label{wirelengthdefinition}
Let $f$ be an embedding of $G$ into $H$. Then the wirelength of embedding $G$ into $H$
is given by
$$ WL(G,H)=\underset{f:G\rightarrow H}\min ~\underset{e\in E(G)}{\sum }dil_f(e)=\underset{f:G\rightarrow H}\min ~\underset{e\in
E(H)}{\sum }EC_{f}(e).$$
\end{defn}

\noindent An illustration for dilation, congestion and wirelength of an embedding torus $G$ into a path $H$ is given in Fig. \ref{fig1}. The dilation, the congestion, and the wirelength problem are different in the sense that an embedding that gives the minimum dilation need not give the minimum congestion (wirelength) and vice-versa. But, it is interesting to note that, for any embedding $g$, the dilation sum, the congestion sum and the wirelength are all equal.

Graph embeddings have been well studied for a number of networks [1,2, 4--7, 11, 13--34]. Even though there are numerous results and discussions on the wirelength problem, most of them deal with only approximate results and the estimation of lower bounds {\rm \cite{BeChHaRoSc98,ChTr98}. But the Congestion Lemma and the Partition Lemma \cite{MaRaRaMe09} have enabled the computation of exact wirelength for embeddings of various architectures \cite{MaRaRaMe09,ArRaRaMa11,FaJi07,HaFaZhYaQi10,RaArRaMa12,RaRaRa12}. In fact, the techniques deal with the congestion sum \cite{MaRaRaMe09} to compute the exact wirelength of graph embeddings. In this paper, we overcome this difficulty by taking non-regular graphs as guest graphs and use dilation-sum to find the exact wirelength.

\begin{figure}
\centering
\includegraphics[width=10 cm]{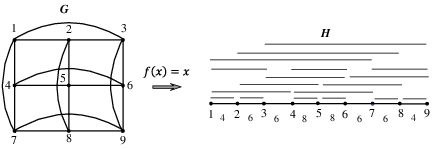}
\caption{Wiring diagram of torus $G$ into path $H$ with $dil_f(G,H)=6$, $EC_f(G,H)=8$ and $WL_f(G,H)=48$.}
\label{fig1}
\end{figure}

\begin{defn}{\rm \cite{RaQuMaWi04,My71}}
A wheel graph $W_n$ of order $n$ is a graph that
contains an outer cycle or rim of order $n-1$, and for which every
vertex in the cycle is connected to one other vertex
(which is known as the hub or center). The edges of a wheel which
include the hub are called spokes.
\end{defn}

\begin{defn}{\rm \cite{ArMaRaRa11, SaBaLiMiSi02}}
A fan graph $F_n$ of order $n$ is a graph that contains a
path of order $n-1$, and for which every vertex in the path
is connected to one other vertex (which is known as the
core). In other words, a fan graph $F_n$ is obtained from $W_n$
by deleting any one of the outer cycle edges.
\end{defn}

\begin{defn}{\rm \cite{SaBaLiMiSi02}}
A friendship graph $T_n$ of order $2n+1$ is a graph consists of $n$ triangles with exactly one common vertex called the hub or center. Alternatively, a friendship graph $T_n$ can be constructed from a wheel $W_{2n+1}$ by removing every second outer cycle edge.
\end{defn}

\begin{defn}
A windmill graph $WM_n$ of order $2n$ is obtained by deleting a vertex $v$ of degree 2 in $T_n$.
\end{defn}


\begin{defn}{\rm \cite{Xu13}}
A star graph $S_n$ is the complete bipartite graph $K_{1,n-1}$.
\end{defn}

Figures \ref{fig2}(a), \ref{fig2}(b), \ref{fig2}(c) and \ref{fig2}(d) illustrate the wheel graph $W_{12}$, fan $F_{12}$, friendship graph $T_{8}$ and windmill graph $WM_8$ respectively.

\begin{figure}
\centering
\includegraphics[width=16 cm]{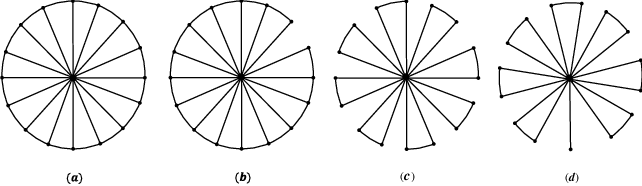}
\caption{(a)~ Wheel graph $W_{17}$ (b) Fan graph $F_{17}$ (c) Friendship graph $T_8$ and (d) Windmill graph $WM_8$}
\label{fig2}
\end{figure}


\begin{figure}
\centering
\includegraphics[width=13.0 cm]{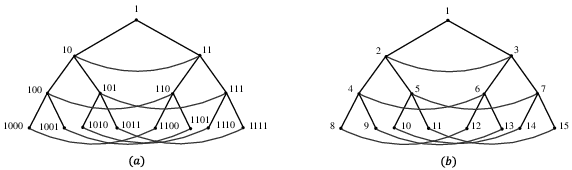}
\caption{$(a)$ $HT(4)$ with binary labels ~~$(b)$ $HT(4)$ with decimal labels}
\label{fig3}
\end{figure}

\begin{defn}{\rm \cite{GoSe81}}
The basic skeleton of a hypertree is a complete binary tree $T_r$, where $r$ is the level of a tree. Here the nodes of the tree are numbered as follows: The root node has label 1. The root is said to be at level 1. Labels of left and right children are formed by appending a 0 and 1, respectively, to the label of the parent node, see Fig. \ref{fig3}$(a)$. The decimal labels of the hypertree in Fig. \ref{fig3}$(a)$ are depicted in Fig. \ref{fig3}$(b)$. Here the children of the node $x$ are labeled as $2x$ and $2x+1$. Additional links in a hypertree are horizontal and two nodes in the same level $i$ of the tree are joined if their label difference is $2^{i-2}$. We denote an $r$ level hypertree as $HT(r)$. It has $2^{r}-1$ vertices and $3~(2^{r-1}-1)$ edges.
\end{defn}

\begin{defn}{\rm \cite{RaMaRaAr13}}
For any non-negative integer $r$, the complete binary tree of height $r-1$,
denoted by $T_{r}$, is the binary tree where each internal vertex has
exactly two children and all the leaves are at the same level. Clearly, a
complete binary tree $T_{r}$ has $r$ levels. Each level $i$, $1\leq i\leq r$,
contains $2^{i-1}$ vertices. Thus, $T_{r}$ has exactly $2^{r}-1$ vertices.
The sibling tree $ST_{r}$ is obtained from the complete binary tree $T_{r}$ by adding edges (sibling edges) between
left and right children of the same parent node.
\end{defn}

\begin{defn}
The $X$-tree $XT_{r}$ is obtained from the complete binary tree $T_{r}$ by adding the consequent vertices in each level by an edge.
\end{defn}

For illustration, the sibling tree $ST(5)$ and $X$-tree $XT_{5}$ are given in Figure \ref{fig4}.

\begin{figure}
\centering
\includegraphics[width=15.0 cm]{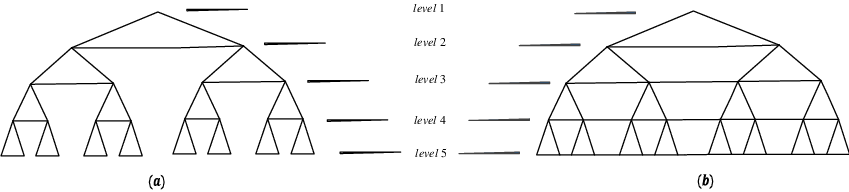}
\caption{$(a)$ Sibling tree $ST(5)$ ~~$(b)$ $X$-tree $XT_{5}$}
\label{fig4}
\end{figure}

\begin{defn}{\rm \cite{RaRaRa2012,BeCoHs95}}
The \rm{undirected circulant graph} \textit{$G(n;\pm S)$, $S\subseteq  \{1,2,\ldots,j\},$  $1\leq j\leq \lfloor n/2 \rfloor$, is a graph with the vertex
set $V=\{0,1,\ldots,n-1\}$ and the edge set $E=\{(i,k):\left\vert k-i\right\vert \equiv s (mod~n),$
$s\in S\}$.}
\end{defn}

It is clear that $G(n;\pm 1)$ is the undirected
cycle $C_{n}$ and $G(n;\pm \{1,2,\ldots,\left\lfloor n/2\right\rfloor \})$ is the complete graph $K_{n}$. The
cycle $G(n;\pm 1)\simeq C_n $ contained in $G(n;\pm \{1,2,\ldots,j\})$, $1\leq j\leq \left\lfloor n/2\right\rfloor $
is sometimes referred to as the outer cycle $C$ of $G$.

\begin{defn}{\rm \cite{RaQuMaWi04}}
Let $v$ be a vertex in $G$. The eccentricity of $v$, denoted by $\eta(v)$, is $\eta(v)=max \{d(u,v)| u \in V\}$. The maximum eccentricity is the graph diameter $d(G)$. That is, $d(G)=\max \{\eta(v):v\in V\}$. The minimum eccentricity is the graph radius $r(G)$. That is, $r(G)=\min \{\eta(v):v\in V\}$. For brevity, we denote $d(G)$ and $r(G)$ as $d$ and $r$ respectively.
\end{defn}


\paragraph{Notation:}For $u\in V(G)$, let $N_i(u)$ denotes the set of all vertices of $G$ at distance $i$ from $u$, $1\leq i\leq d$, where $d$ denotes the diameter of $G$.

\section{Main Results}
In this section we compute the dilation, congestion and wirelength of embedding onto wheel-like networks.

\subsection{Dilation}
We begin with the following definition.

\begin{defn}
A dominating set in a graph $G$ is a set of vertices $S$ such that each vertex is either in $S$ or is adjacent to a vertex in $S$. The minimum cardinality of a dominating set of $G$ is the domination number.
\end{defn}

\begin{lem}\label{lem0}
Let $G$ be a graph with domination number 1 and $H$ be a graph with $|V(G)|=|V(H)|=n$. Then $dil(G,H)\geq r$, where $r$ is the radius of $H$.
\end{lem}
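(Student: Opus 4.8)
The plan is to exploit the hypothesis that $G$ has domination number $1$, which forces $G$ to contain a \emph{universal} vertex, and then to argue that under any bijective embedding one of the edges incident to that vertex must be stretched across the full eccentricity of its image in $H$. First I would observe that a dominating set of size $1$ is a single vertex $u_0\in V(G)$ such that every other vertex is either $u_0$ or adjacent to $u_0$; hence $(u_0 w)\in E(G)$ for all $w\in V(G)\setminus\{u_0\}$, so $u_0$ is joined to all $n-1$ remaining vertices of $G$.

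Next I would fix an arbitrary embedding $f$ of $G$ into $H$. Since we work throughout with expansion one and $|V(G)|=|V(H)|=n$, the vertex map $f$ is a bijection. Writing $x_0=f(u_0)$, the key point is that as $w$ ranges over $V(G)\setminus\{u_0\}$, the image $f(w)$ ranges over all of $V(H)\setminus\{x_0\}$. For each such edge, its dilation $dil_f(u_0 w)$ is the length of the path $P_f(u_0 w)$ joining $x_0$ and $f(w)$ in $H$, and this length is at least the distance $d(x_0,f(w))$. Taking the maximum over all edges incident to $u_0$ then yields
$$\max_{e\in E(G)} dil_f(e)\ \geq\ \max_{w\neq u_0} d(x_0,f(w))\ =\ \max_{y\neq x_0} d(x_0,y)\ =\ \eta(x_0)\ \geq\ r,$$
where the final inequality holds because the radius $r$ is the minimum eccentricity over all vertices of $H$, so $\eta(x_0)\geq r$ for every choice of $x_0$. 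Since $f$ was arbitrary, minimizing the left-hand side over all embeddings gives $dil(G,H)\geq r$, as required.

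I do not anticipate a serious obstacle in this argument; it is essentially a counting-plus-distance observation. The one subtlety worth making explicit is the role of bijectivity: because the expansion is one, $f$ is a bijection, and this is exactly what forces the neighborhood of the universal vertex $u_0$ to cover every vertex of $H$ other than $x_0$, and hence to realize the eccentricity $\eta(x_0)$. Were $G$ to lack a universal vertex, or were $f$ merely injective into a larger host, the neighbors of $u_0$ need not reach a vertex at distance $\eta(x_0)$ from $x_0$, and the bound could fail.
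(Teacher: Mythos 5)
Your proof is correct and follows essentially the same route as the paper's: identify the universal vertex forced by domination number $1$, note that under the bijective vertex map its incident edges must reach every other vertex of $H$, and conclude that some edge is dilated by at least the eccentricity of the image vertex, which is at least $r$. Your write-up is in fact more careful than the paper's (which splits informally into the cases $\eta(v)=r$ and $\eta(v)>r$), but the underlying argument is identical.
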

\begin{proof}
Since the domination number of $G$ is 1, there exist a vertex $u\in V(G)$ such that $d(u)=n-1$. Let $f$ be an embedding from $V(G)$ to $V(H)$ and map $f(u)=v$. If eccentricity of $v$ is minimum, then $dil(G,H)\geq r$. Otherwise, $dil(G,H)\geq r+1$. Hence the proof.
\end{proof}

\begin{cor}
Let $G$ be a graph with domination number 1 and $H$ be a vertex-transitive graph with $|V(G)|=|V(H)|=n$. Then $dil(G,H)=d$, where $d$ is the diameter of $H$.
\end{cor}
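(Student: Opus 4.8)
The plan is to sandwich $\mathrm{dil}(G,H)$ between the lower bound supplied by Lemma \ref{lem0} and a trivial upper bound, and then to close the gap using the defining property of vertex-transitivity. The one structural fact I would establish first is that a vertex-transitive graph has constant eccentricity. Given any two vertices $u,w\in V(H)$, vertex-transitivity provides an automorphism $\phi$ with $\phi(u)=w$; since automorphisms preserve adjacency, they preserve distances, whence $\eta(u)=\eta(w)$. As $u,w$ were arbitrary, every vertex of $H$ has the same eccentricity, so $r(H)=d(H)=d$. This is the observation that makes the corollary more than a restatement of Lemma \ref{lem0}.

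With that in hand the lower bound is immediate. Since $G$ has domination number $1$, Lemma \ref{lem0} yields $\mathrm{dil}(G,H)\geq r$, and combining this with $r=d$ from the previous step gives $\mathrm{dil}(G,H)\geq d$. For the matching upper bound I would argue that no embedding can force a dilation exceeding the diameter: for any embedding $f$ and any edge $e=(xy)\in E(G)$, we are free to choose $P_f(e)$ to be a shortest $f(x)$–$f(y)$ path in $H$, so that $\mathrm{dil}_f(e)=d_H(f(x),f(y))\leq d$. Hence $\max_{e}\mathrm{dil}_f(e)\leq d$ for this choice of routing, and therefore $\mathrm{dil}(G,H)=\min_f\max_e \mathrm{dil}_f(e)\leq d$. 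Together the two inequalities give $\mathrm{dil}(G,H)=d$.

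I expect no serious obstacle here: the only substantive idea is recognizing that vertex-transitivity collapses the radius–diameter gap, after which Lemma \ref{lem0} and the elementary diameter bound do all the work. The single point meriting care is the upper bound, where one must note explicitly that the embedding may route each guest edge along a shortest host path, so that the worst-case edge dilation never exceeds $d$; the lower bound and the equality $r=d$ are otherwise routine.
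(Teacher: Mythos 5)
Your proposal is correct and is exactly the argument the paper intends: the corollary is stated without proof as an immediate consequence of Lemma \ref{lem0}, and your two added observations (vertex-transitivity forces constant eccentricity so $r=d$, and routing every guest edge along a shortest host path gives the matching upper bound $\mathrm{dil}(G,H)\leq d$) are precisely what is needed to close the gap. No discrepancy with the paper's approach.
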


We now compute the dilation of embedding wheel-like networks into hypertree and prove that the lower bound obtained in Lemma \ref{lem0} is sharp.

\begin{thm}\label{thm0}
Let $G$ be $W_n$ or $F_n$ or $T_{\frac{n-1}{2}}$ or $S_n$ and $H$ be a $l$-level hypertree $HT(l)$, where $2^l-1=n, ~l\geq 3$. Then $dil(G,H)=r=l-1$, where $r$ is the radius of $H$.
\end{thm}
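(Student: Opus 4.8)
The plan is to prove matching lower and upper bounds, both equal to $l-1$. For the lower bound I would first note that each guest graph $W_n$, $F_n$, $T_{(n-1)/2}$ and $S_n$ has a \emph{universal} vertex---the hub, core, or centre---adjacent to all $n-1$ other vertices, so each has domination number $1$. Lemma \ref{lem0} then gives $dil(G,H)\ge r$, where $r=r(HT(l))$. It remains to identify $r=l-1$. Since every edge of $HT(l)$ is either a tree edge (joining consecutive levels) or a horizontal edge (within one level), and only tree edges change the level of a vertex, the graph distance between any two vertices is at least their difference in levels. Hence every level-$l$ leaf lies at distance $\ge l-1$ from the root and the downward tree path attains this, so the root has eccentricity exactly $l-1$, giving $r\le l-1$; a short case analysis (a level-$2$ vertex still sees an opposite-subtree leaf at distance $l-1$, while deeper vertices have strictly larger eccentricity) shows no vertex does better, so $r=l-1$.

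For the upper bound I would exhibit a single embedding attaining dilation $l-1$, treating the wheel first and deducing the other three cases from it. Map the hub (core/centre) to the root of $HT(l)$. Every spoke then becomes a host path emanating from the root, of length at most the root's eccentricity $l-1$, so all spokes are within budget automatically. The real content is the placement of the $n-1$ rim vertices on the $n-1$ non-root vertices so that each rim edge is realised by a host path of length $\le l-1$. Equivalently, introduce the auxiliary graph $D$ on the non-root vertices in which two vertices are adjacent exactly when their $HT(l)$-distance is at most $l-1$; for the wheel I need a Hamiltonian cycle of $D$. Once this cycle is in hand, the fan (whose rim is a \emph{path}) is the cycle minus one edge, the friendship graph's triangle bases form an alternate set of cycle edges (a perfect matching, which exists since the cycle has even length $n-1=2^{l}-2$), and the star requires nothing at all; in every case each non-spoke edge inherits dilation $\le l-1$, matching the lower bound.

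The main obstacle is the Hamiltonian cycle in $D$. The naive hope---routing every rim edge along a single host edge---fails, because in the distance-$1$ subgraph on the non-root vertices the $2^{l-1}$ leaves all have degree $2$, and their forced incidences lock up into short cycles (for $l=4$ the two hexagons $(4,8,12,6,13,9)$ and $(5,10,14,7,15,11)$), so no Hamiltonian cycle can use length-$1$ edges only and genuinely longer connectors are unavoidable. The enabling fact is that each level-$2$ vertex lies within distance $l-1$ of \emph{every} non-root vertex: it reaches any vertex of its own subtree in at most $l-2$ downward steps, and any vertex of the opposite subtree through the single horizontal edge joining the two level-$2$ vertices. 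Thus in $D$ the two level-$2$ vertices are adjacent to all others, and using them as universal connectors reduces the problem to finding a Hamiltonian \emph{path} of $D$ restricted to the vertices of levels $3,\dots,l$: given such a path with endpoints $p$ and $q$, prepending the level-$2$ vertex $2$ at $p$ and appending $3$ at $q$ and closing back to $2$ yields the Hamiltonian cycle. I would build this path by induction on $l$, exploiting the decomposition of $HT(l)$ into its two principal subtrees stitched together by the crossing horizontal edges; I expect the delicate bookkeeping to be verifying that the chosen subtree-to-subtree crossings keep consecutive rim vertices within distance $l-1$. The explicit cycle $(2,8,12,7,14,15,6,13,9,5,10,11,4,3)$ found for $l=4$ is the template I would generalise.
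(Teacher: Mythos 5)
Your lower bound is exactly the paper's: each of the four guest graphs has a universal vertex, hence domination number $1$, Lemma \ref{lem0} gives $dil(G,H)\ge r$, and your verification that $r(HT(l))=l-1$ is correct (the paper takes this identification for granted). Your upper-bound framework is also the right one, and the reductions from the wheel to the fan, the friendship graph and the star are sound. The gap is that the one piece of real content --- a cyclic ordering of the $n-1$ non-root vertices of $HT(l)$ in which consecutive vertices are at host distance at most $l-1$, i.e.\ your Hamiltonian cycle of the auxiliary graph $D$ --- is never constructed for general $l$. You verify $l=4$ by hand and then announce an induction on $l$ whose ``delicate bookkeeping'' you explicitly defer; as written this is a plan rather than a proof, and that induction is precisely where the difficulty of the theorem lives.

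The paper closes this gap with a single explicit ordering rather than an induction: it places the non-root vertices in the pre-order (depth-first) order of the underlying complete binary tree $T_l$. This order is in fact a Hamiltonian cycle of your graph $D$: consecutive pre-order vertices are either a parent and its left child (distance $1$), or the rightmost leaf of the left subtree of some ancestor $a$ at level $i$ followed by the right child of $a$; the latter pair is at tree distance $(l-i)+1\le l-1$ when $i\ge 2$, and when $a$ is the root the level-$2$ horizontal edge of the hypertree shortens the distance from $l$ to $l-1$. The wrap-around edge, from the last leaf back to the left child of the root, likewise has length $l-1$ through the level-$2$ horizontal edge --- the same universal role of the level-$2$ vertices that you identified. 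Adopting this ordering would let you delete the unproved induction entirely and keep the rest of your argument, including the derivations for $F_n$, $T_{(n-1)/2}$ and $S_n$, unchanged.
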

\begin{proof}
Since the domination number of $G$ is 1 and by Lemma \ref{lem0}, we have $dil(G,H)\geq r$. We now prove the equality.

Label the vertices of $G$ as follows:

\begin{itemize}
  \item hub vertex as $1$;
  \item outer vertices as $2,3,\ldots,n$ consecutively start with any vertex in the clockwise or anti-clockwise direction, see Fig. \ref{fig5}(a).
\end{itemize}

\begin{figure}
\centering
\includegraphics[width=11.0 cm]{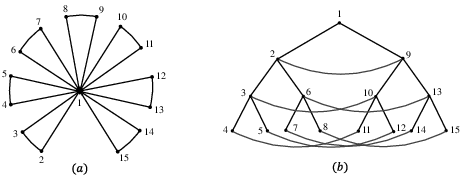}
\caption{$(a)$ Labelling of $T_7$ ~~$(b)$ Labelling of $HT(4)$}
\label{fig5}
\end{figure}

Removal of the horizontal edges in hypertree $HT(l)$ leaves a complete binary tree $T_l$. Label the vertices of $T_l$ using pre-order labeling begin with level 1 vertex, see Fig. \ref{fig5}(b). Let $f(x)=x$ for all $x\in V(G)$ and for $(ab)\in E(G)$. Let $P_f(ab)$ be a shortest path between $f(a)$ and $f(b)$ in $HT(l)$.

Since the hub vertex with label 1 in $V(G)$ is mapped into a vertex $f(1)=1$ in $V(H)$ is in level 1 gives the minimum eccentricity of $H$ and hence any edge $e=(uv)\in E(G)$ with either $u$ or $v$ as a hub vertex is mapped into a path $P_f(uv)$ in $H$ with dilation at most $l-1$, which is nothing but the radius $r$ of $H$.

We now claim that the outer edges of $G$ are mapped into a path of length at most $l-1$ in $H$. Since the graph $H$ is obtained from $T_l$, the left and right children of any parent node in level $l-1$ is connected by a path of length 2. By the labeling of pre-order traversal in $T_l$, for any parent node in level $i$, $1\leq i\leq l-2$, the right most vertex of a left node and the right node of a parent node are connected by a path length at most $l-1$ and hence the dilation of any outer edge in $G$ is at most $l-1$ in $H$. Hence the proof.
\end{proof}

Using the same analog, we prove the following result.

\begin{thm}
Let $G$ be $W_n$ or $F_n$ or $T_{\frac{n-1}{2}}$ or $S_n$ and $H$ be a $l$-level sibling tree $ST(l)$ or $l$-level $X$-tree $XT_l$, where $2^l-1=n, ~l\geq 3$. Then $dil(G,H)=r=l-1$, where $r$ is the radius of $H$.
\end{thm}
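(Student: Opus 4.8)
The plan is to mirror the proof of Theorem \ref{thm0}, letting the sibling edges of $ST(l)$ play the role that the horizontal edges of the hypertree played there. Each guest graph $W_n$, $F_n$, $T_{(n-1)/2}$, $S_n$ has domination number $1$, since the hub (respectively core, center) is adjacent to every other vertex, and $|V(G)|=|V(H)|=2^l-1=n$; hence Lemma \ref{lem0} gives $dil(G,H)\geq r$ immediately. It then remains to show $dil(G,H)\leq l-1$ by an explicit embedding and to confirm $r=l-1$, so that the two bounds meet. One preliminary observation does double duty: the two children of a common parent occupy consecutive positions in their level, so every sibling edge of $ST_l$ is also a level edge of $XT_l$. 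Thus $ST_l$ is a spanning subgraph of $XT_l$, distances in $XT_l$ never exceed those in $ST_l$, and it suffices to analyse the sparser host $ST_l$.

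For the embedding I reuse the labelling of Theorem \ref{thm0}: label the hub of $G$ by $1$ and the outer vertices $2,\dots,n$ consecutively, delete the horizontal edges of the host to recover the skeleton $T_l$, pre-order label $T_l$ from the root, and set $f(x)=x$ with $P_f$ a shortest path. A single distance identity controls the easy edges: since a horizontal edge leaves the level unchanged while each tree edge changes it by one, a path from the root to a level-$t$ vertex has length at least $t-1$, and the tree path attains it, so $d(\mathrm{root},v)=\mathrm{lvl}(v)-1$. Consequently every spoke/core edge, which joins the hub (at the root) to some outer vertex, has dilation at most $l-1$, and the root has eccentricity exactly $l-1$. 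To get $r=l-1$ I must also check that the root minimises eccentricity: for $ST_l$ this follows from a gateway argument, namely that every path between the two principal subtrees passes through the root or one of its children, forcing the corner leaf of the opposite subtree to be at distance at least $l-1$ from any given vertex; a similar crossing argument (one must cross near the top to reach the diagonally opposite corner) gives the same bound for $XT_l$.

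The crux is the outer edges, which join pre-order-consecutive vertices $i$ and $i+1$, together with the single wrap-around edge $(n,2)$ for the wheel. The fan omits only that wrap-around, the friendship graph keeps every second such edge, and the star has no outer edges at all, so all four cases reduce to bounding the dilation between consecutive pre-order labels. When vertex $i$ is internal, $i+1$ is its left child and the dilation is $1$; the delicate case is when $i$ is a leaf and the traversal jumps across a subtree boundary. I would argue that if the traversal backtracks to a node $v$ at level $t$ and then moves to $v$'s right child, then $i$ is the rightmost leaf of $v$'s left subtree, so one climbs $l-t-1$ edges to $v$'s left child, crosses the sibling edge to $v$'s right child (which is $i+1$), and arrives in $l-t\leq l-1$ steps; the wheel's wrap-around is handled identically via the root's sibling edge. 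This subtree-crossing estimate is the step I expect to be the main obstacle, because it is where the pre-order indexing and the sibling edges must be reconciled carefully. The worst case $t=1$ saturates the bound at exactly $l-1$, so there is no slack and an off-by-one here would sink the argument.

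Combining everything, the embedding gives $dil(G,H)\leq l-1$ for the host $ST_l$, and since $ST_l$ is a spanning subgraph of $XT_l$ the same map gives $dil(G,H)\leq l-1$ for $XT_l$ as well. With $r=l-1$ established for both hosts, the lower bound $dil(G,H)\geq r$ from Lemma \ref{lem0} matches, yielding $dil(G,H)=r=l-1$ for every listed guest graph and both host families.
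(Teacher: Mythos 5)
Your proposal is correct and follows essentially the same route as the paper, which justifies this theorem only by the remark that it is ``the same analog'' as Theorem \ref{thm0}: you reuse the pre-order labelling with the hub at the root, bound the spokes by the level argument, and bound each outer edge by climbing from the rightmost leaf of a left subtree to the relevant left child and crossing the sibling edge (playing the role the horizontal edges played in the hypertree), then pass to $XT_l$ via the observation that $ST_l$ is a spanning subgraph of it. The one spot to tighten is the radius claim for $XT_l$, where ``one must cross near the top'' is not literally true since level edges join the two principal subtrees at every level; the bound $r(XT_l)=l-1$ still holds, e.g.\ because the two extreme leaves $L,R$ satisfy $d(L,R)=2l-3$, so every vertex has eccentricity at least $\lceil(2l-3)/2\rceil=l-1$.
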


\subsection{Congestion}
In this section, we first obtain the lower bound for congestion of embedding onto wheel-like networks. Then prove that the lower bound obtained is sharp for embedding windmill graphs into circulant graphs. To prove the main result, we need the following result.

\begin{lem}\label{lem00}
Let $G$ be a graph with domination number 1 and $H$ be a graph with $|V(G)|=|V(H)|=n$. Then $EC(G,H)\geq \lceil\frac{n-1}{\vartriangle}\rceil$, where $\vartriangle$ is the maximum degree of $H$.
\end{lem}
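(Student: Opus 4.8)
The plan is to exploit the single vertex of full degree in $G$ and a counting argument based on the host's maximum degree $\triangle$. Since $G$ has domination number $1$, there is a vertex $u\in V(G)$ with $\deg_G(u)=n-1$, so $u$ is adjacent to all $n-1$ other vertices. Under any embedding $f$, each of these $n-1$ edges incident to $u$ is routed as a path in $H$ starting at $f(u)$. The key observation is that every such path must leave $f(u)$ through one of the edges of $H$ incident to $f(u)$, and there are at most $\triangle$ such incident edges.

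First I would fix an arbitrary embedding $f$ and concentrate on the vertex $v=f(u)\in V(H)$. Let $S\subseteq E(H)$ be the set of edges of $H$ incident to $v$; then $|S|\le\triangle$. For each edge $e=(uw)\in E(G)$ with $w\ne u$, the routing path $P_f(uw)$ begins at $v$ and therefore its first edge lies in $S$. Hence each of the $n-1$ edges incident to $u$ in $G$ contributes at least one to the congestion of some edge in $S$, giving $\sum_{e\in S}EC_f(e)\ge n-1$. Since $|S|\le\triangle$, by averaging (pigeonhole) there must exist an edge $e^\ast\in S$ with $EC_f(e^\ast)\ge\lceil\frac{n-1}{\triangle}\rceil$, so $\max_{e\in E(H)}EC_f(e)\ge\lceil\frac{n-1}{\triangle}\rceil$.

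Because this bound holds for every embedding $f$, taking the minimum over all $f$ preserves it, yielding $EC(G,H)\ge\lceil\frac{n-1}{\triangle}\rceil$, which is the claim. I do not expect any genuine obstacle here: the only point requiring a little care is justifying that the $n-1$ paths $P_f(uw)$ each use a \emph{distinct} initial segment so that their counts genuinely sum over the edges of $S$ rather than being double-counted on a single edge. This is automatic, since the congestion $EC_f(e)$ already counts all guest edges whose routing path uses $e$, so summing $EC_f(e)$ over $e\in S$ counts each incident guest edge at least once (on its first edge), and the averaging step then delivers the ceiling.
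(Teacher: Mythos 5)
Your proposal is correct and follows essentially the same route as the paper: isolate the degree-$(n-1)$ hub $u$ guaranteed by domination number $1$, observe that all $n-1$ routing paths for edges at $u$ must use one of the at most $\vartriangle$ host edges incident to $f(u)$, and conclude by pigeonhole that some such edge has congestion at least $\lceil\frac{n-1}{\vartriangle}\rceil$ for every embedding. Your explicit averaging over $\sum_{e\in S}EC_f(e)\ge n-1$ is in fact a cleaner and more rigorous rendering of the paper's rather opaque minimum-over-degrees expression, which as written asserts the bound for \emph{every} edge of $S$ rather than for \emph{some} edge.
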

\begin{proof}
Since the domination number of $G$ is 1, there exist a vertex $u\in V(G)$ such that $d(u)=n-1$, where $n=|V(G)|$. Let $f$ be an embedding from $V(G)$ to $V(H)$ and map $f(u)=v$. Let $S=\{e : d(v,w)=1, w\in V(H)\}$, then for any $e\in S$,
$$EC_f(e)\geq \min \left\{\frac{n-1}{\delta_0},\frac{n-1}{\delta_1},\ldots,\frac{n-1}{\delta_n}\right\}=\left \lceil\frac{n-1}{\delta_n}\right \rceil,$$
where $\delta=\delta_0 \leq \delta_1 \leq \cdots \leq \delta_n=\vartriangle$. Thus, there is at least one edge in $H$ with congestion $\left \lceil\frac{n-1}{\vartriangle}\right \rceil$. Further, for any embedding $g$ of $G$ into $H$, $EC_g(e)\geq EC_f(e)\geq \left \lceil\frac{n-1}{\vartriangle}\right \rceil$. Therefore,
$$EC(G,H)\geq \underset{g}{\min} ~EC_g(e)\geq \underset{g}{\min} ~EC_f(e)\geq \left \lceil\frac{n-1}{\vartriangle}\right \rceil.$$
Hence the proof.
\end{proof}

We now compute the edge congestion of embedding windmill graphs into circulant networks and prove that the lower bound obtained in Lemma \ref{lem00} is sharp.

\begin{thm}\label{thm00}
Let $G$ be a windmill graph $WM_{2^{n-1}}$ and $H$ be a circulant network $H(2^n;\pm \{1, 2^{n-2}\})$, where $n$ is large. Then $EC(G,H)=2^{n-2}$.
\end{thm}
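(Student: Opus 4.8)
The plan is to sandwich $EC(G,H)$ between the lower bound of Lemma \ref{lem00} and a matching upper bound obtained from an explicit embedding. Throughout, write $m=2^{n-2}$, so that $H=H(4m;\pm\{1,m\})$ has $4m=2^{n}$ vertices, each vertex $i$ being adjacent to $i\pm 1$ and $i\pm m \pmod{4m}$; since $1\ne m\ne 2m$ for large $n$, $H$ is $4$-regular, so $\vartriangle=4$. For the lower bound I would first observe that the hub of $WM_{2m}$ is adjacent to every other vertex, so $WM_{2m}$ has domination number $1$; Lemma \ref{lem00} then gives $EC(G,H)\ge \lceil (4m-1)/4\rceil = m = 2^{n-2}$. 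It remains to exhibit an embedding attaining congestion $m$.

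For the upper bound I would place the hub at vertex $0$ and, to organise the routing, think of the remaining vertices as four ``columns'' $Q_q=\{qm,qm+1,\dots,qm+m-1\}$, $q\in\{0,1,2,3\}$, where the chords join $qm+t$ to $(q\pm1)m+t \pmod{4m}$ and the $\pm1$ edges run along the columns. All $4m-1$ spokes leave $0$ through its four incident edges $(0,1),(0,4m-1),(0,m),(0,3m)$, so each must carry about $m$ spokes; the whole difficulty is continuing into the interior without exceeding $m$ anywhere. I would route the spokes thus: the $m-1$ spokes to $Q_0$ along $0,1,\dots,t$; the $m$ spokes to $Q_1$ through the chord $(0,m)$ and then along $Q_1$; and the $2m$ spokes to $Q_2\cup Q_3$ split symmetrically at the middle position $t=m/2$ (here ``$n$ large'', hence $m$ even, is used), the lower half entering through the chord $(0,3m)$ and the upper half through the edge $(0,4m-1)$, each walking along $Q_3$ and crossing to $Q_2$ by the single chord $(2m+t,3m+t)$ at its own position $t$. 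A direct count of how many of these paths traverse each edge shows the three edges $(0,m),(0,3m),(0,4m-1)$ carry exactly $m$ while every other edge carries at most $m-1$; the essential point is that $Q_2$ is reached only by the terminal chords, so no horizontal edge of $Q_2$ is used, and the midpoint split keeps the two ``backward'' entries at $m$ rather than $2m$.

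Finally I would account for the $2m-1$ outer (triangle) edges of $WM_{2m}$. As the embedding is a bijection, I am free to choose which pairs of host vertices they join, and I would take the near-perfect matching $\{(1,2),(3,4),\dots,(4m-3,4m-2)\}$ of consecutive vertices, leaving $4m-1$ (the image of the lone vertex of the broken triangle) unmatched. Each such edge is a $\pm1$ edge of $H$, hence has dilation $1$ and adds $1$ to the congestion of a single edge not incident with $0$. Since the only spoke-saturated edges are the three hub edges (which receive no outer edge) and the two remaining load-$(m-1)$ edges $(0,1)$ and $(m,m+1)$ are never matched (because $m$ is even), every matched edge has spoke-congestion at most $m-1$, so the total stays $\le m$. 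Hence $EC(G,H)\le m$, and with the lower bound $EC(G,H)=2^{n-2}$.

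I expect the main obstacle to be the upper-bound routing, specifically reaching the ``far'' column $Q_2$—which is not adjacent to the hub—through the chords while holding every intermediate edge at or below $m$; the symmetric midpoint split of $Q_2\cup Q_3$ between the two backward hub edges is exactly what prevents the congestion there from doubling to $2m$, and verifying the per-edge counts is the one genuinely computational step.
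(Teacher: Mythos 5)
Your proposal is correct and follows essentially the same strategy as the paper: the lower bound $2^{n-2}$ comes from Lemma \ref{lem00} via the domination-number-one observation, and the matching upper bound comes from an explicit embedding that places the hub at one vertex, splits the $2^n-1$ spokes into four groups of size at most $2^{n-2}$ through the hub's four incident edges, and lays the $2^{n-1}-1$ triangle edges on a matching of consecutive rim edges avoiding the saturated ones. The only difference is a routing detail: the paper sends the two ``far'' quarters through the hub's chords and then along the rim, whereas you send them along the rim of $Q_3$ and hop a chord at the end to reach $Q_2$; both keep every host edge at congestion at most $2^{n-2}$.
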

\begin{proof}
Since the domination number of $G$ is 1 and by Lemma \ref{lem00}, $EC(G,H)\geq 2^{n-2}$. We now prove the equality.

Label the vertices of $G$ as follows:

\begin{itemize}
  \item hub vertex as $1$;
   \item pendent vertex as $2^n$;
  \item remaining vertices as $2,3,\ldots,2^n-1$ consecutively start with any vertex such that $(i,i+1)$ are adjacent, where $i$ even and $2\leq i \leq 2^n-2$.
\end{itemize}

Label the consecutive vertices of $H(2^n;\pm \{1\})$ in $H$ in the clockwise sense. Let $f(x)=x$ for all $x\in V(G)$ and for $(ab)\in E(G)$, let $P_f(ab)$ be a shortest path between $f(a)$ and $f(b)$ in $H$.

Since $H$ is vertex transitive, map the hub vertex $u$, which is labeled as 1 in $G$ into any vertex $v=f(u)$ in $H$. Without loss of generality, the label of $v$ as 1 $i.e., f(u)=f(1)=1=v$. Now, we map the edges in $G$ into a path $P_f$ in $H$ using the following algorithm.

\begin{itemize}
  \item For $(1i)\in E(G)$, let $P_f(1i)$ must pass through the outer cycle of $H$ in the clockwise direction, where $2\leq i\leq 2^{n-2}+1$;
   \item For $(1i)\in E(G)$, let $P_f(1i)$ must pass through the outer cycle of $H$ in the anti-clockwise direction, where $3\cdot 2^{n-2}+1\leq i\leq 2^{n}$;
  \item For $(1i)\in E(G)$, let $P_f(1i)$ must pass through an edge, which is labelled as $(1,2^{n-2}+1)$ followed by the outer cycle of $H$ in the clockwise direction, where $2^{n-2}+2\leq i\leq 2^{n-1}+1$;
      \item For $(1i)\in E(G)$, let $P_f(1i)$ must pass through an edge, which is labelled as $(1,3\cdot 2^{n-2}+1)$ followed by the outer cycle of $H$ in the anti-clockwise direction, where $2^{n-1}+2\leq i\leq 3\cdot 2^{n-2}$.
\end{itemize}

From the above algorithm, it is easy to see that the edge congestion of each edges in $H$ is at most $2^{n-2}$. At this stage, the following edges in $H$ are having $2^{n-2}$ as the edge congestion and we denote the set by $A=\{(1,2), (1,2^{n-2}+1),(2^{n-2}+1,2^{n-2}+2),(1,2^n)\}$. Now, the remaining edges $(i,i+1)$, $2\leq i\leq 2^n-2$ and $i$ is even in $E(G)$ is mapped into a path of length 1 in $H$ and it will not contribute the congestion in any of the edges in $A$. Hence the proof.
\end{proof}
\subsection{Wirelength}
We need the following lemma to prove the main result.

\begin{lem}\label{lem1}
Every 2-fault hamiltonian graph on $n$ vertices contains a hamiltonian path of length $n-1$.
\end{lem}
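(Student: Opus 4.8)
The plan is to reduce the statement to the ordinary hamiltonicity of $G$ and then pass from a hamiltonian cycle to a hamiltonian path by deleting a single edge. The key observation is that the fault-tolerance hypothesis is monotone in the number of faults: the definition of $f$-fault hamiltonicity quantifies over all configurations of \emph{$f$ or fewer} faulty vertices and/or edges, so a $2$-fault hamiltonian graph is automatically $1$-fault hamiltonian and, taking the empty fault set, is hamiltonian in the usual sense. Thus $G$ possesses a hamiltonian cycle $C$ passing through all $n$ vertices.

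Given such a cycle $C$, which has exactly $n$ edges, I would delete an arbitrary edge of $C$. The resulting subgraph is a single path that still visits every vertex of $G$ exactly once and has $n-1$ edges; this is precisely a hamiltonian path of length $n-1$, as required.

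If one prefers an argument that engages the fault-tolerance hypothesis more directly, and thereby motivates the appearance of the fault condition, I would instead fix an arbitrary edge $e=(u,v)$ of $G$ and treat $e$ as a single faulty edge. By $1$-fault (hence $2$-fault) hamiltonicity there is a cycle $C$ containing all $n$ non-faulty vertices while avoiding $e$; deleting any edge of this $C$ again yields a spanning path with $n-1$ edges.

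The only point that genuinely requires care, and the sole place the argument could slip, is the reading of the definition: one must confirm that ``$f$ or less faulty vertices and/or edges'' admits the zero-fault case, so that $2$-fault hamiltonicity really does force a hamiltonian cycle in $G$ itself. Once that is settled, the remaining steps are immediate and no further structural information about the graph class is needed; in particular, the full strength of $2$-fault (as opposed to plain hamiltonicity) is not exploited here beyond fixing the class of host graphs under study.
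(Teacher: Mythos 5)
Your proof is correct, and it takes a genuinely different (and in fact cleaner) route than the paper's. You reduce everything to the zero-fault case: since the definition of $f$-fault hamiltonicity quantifies over all configurations of \emph{at most} $f$ faulty vertices and/or edges, a $2$-fault hamiltonian graph is hamiltonian outright, and deleting any one edge of a hamiltonian cycle leaves a spanning path with $n-1$ edges. The paper instead deletes two vertices $u,v$, invokes $2$-fault hamiltonicity to get a hamiltonian cycle $C'$ of $G\setminus\{u,v\}$ on $n-2$ vertices, and then tries to splice $u$ back in by replacing a cycle edge incident to some $w\in V(C')$ with the edge $(u,w)$. As written, that construction yields a path on only $n-1$ vertices with $n-2$ edges --- the vertex $v$ is never reattached --- so it does not by itself produce a hamiltonian path of length $n-1$ and would need a further step to absorb $v$. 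Your argument sidesteps this difficulty entirely, at the cost of using none of the extra strength of the $2$-fault hypothesis (which, as you note, is only there to delimit the class of host graphs). The one caveat you flag --- that ``$f$ or less'' must be read as including the empty fault set --- is exactly the right point to check, and the paper's stated definition does include it.
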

\begin{proof}
Let $G$ be a 2-fault hamiltonian graph. Then for $u,v\in V(G)$, $G\setminus\{u,v\}$ contains a hamiltonian cycle $C'$ of length $n-2$. Since $G$ is connected at least one of $u$ or $v$ is adjacent to a vertex $w$ in $C'$. Without loss of generality let $(u,w)\in E(G)$. Let $z$ in $V\setminus C'$ be adjacent to $w$. Now $(C'\setminus (z,w))\cup (u,w)$ is a hamiltonian path of length $n-1$ in $G$.
\end{proof}

\begin{thm}\label{thm1}
Let $G$ be a wheel and $H$ be a graph with $u$ as a median. Then $WL(G,H)\geq n-1+\delta(u)$. Equality holds if and only if $H\setminus {u}$ is hamiltonian.
\end{thm}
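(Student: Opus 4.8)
The plan is to split the edge set of the wheel $G=W_n$ into its $n-1$ spokes (the edges meeting the hub $h$) and its $n-1$ rim edges (the cycle $C_{n-1}$ on the non-hub vertices), and to estimate the two contributions to the dilation sum separately. Fix any embedding $f$ and write $v=f(h)$. Because $f$ is a bijection with expansion one, the images of the rim vertices are exactly $V(H)\setminus\{v\}$, so the spokes contribute
\[
\sum_{e\in\text{spokes}}dil_f(e)\;\ge\;\sum_{w\in V(H)\setminus\{v\}}d_H(v,w)=\delta(v),
\]
the total distance (transmission) from $v$; here I read $\delta(u)$ as this transmission, which is precisely the quantity that a median minimizes. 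I would also record that each of the $n-1$ rim edges has dilation at least $1$, so the rim contributes at least $n-1$.

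For the lower bound I would then invoke the median hypothesis: since $u$ minimizes the transmission, $\delta(v)\ge\delta(u)$ for every choice of $v=f(h)$. Adding the two estimates gives $WL_f(G,H)\ge \delta(u)+(n-1)$ for every embedding $f$, and taking the minimum over $f$ yields $WL(G,H)\ge n-1+\delta(u)$. This is the easy half and uses nothing beyond the bijectivity of $f$ and the definition of a median.

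The sufficiency (``if'') direction is a direct construction. Assuming $H\setminus u$ has a Hamiltonian cycle $w_1w_2\cdots w_{n-1}w_1$, I would set $f(h)=u$ and send the rim vertices $r_1,\dots,r_{n-1}$ to $w_1,\dots,w_{n-1}$ in cyclic order. Every rim edge then has dilation $1$ (consecutive $w_i$ are adjacent in $H$), so the rim sum equals $n-1$, while routing each spoke along a shortest path makes the spoke sum equal $\delta(u)$. Hence this $f$ attains $WL_f=n-1+\delta(u)$, matching the lower bound, so equality holds.

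The necessity (``only if'') direction is where the real work lies, and I expect it to be the main obstacle. Starting from an optimal $f$ with $WL(G,H)=n-1+\delta(u)$, equality must propagate through both estimates above: the spoke sum must equal $\delta(u)$ (so $v=f(h)$ is itself a median) and every rim edge must have dilation exactly $1$. The latter says that consecutive rim images are adjacent in $H$, so the sequence $f(r_1),\dots,f(r_{n-1})$ traces a Hamiltonian cycle of $H\setminus v$. The delicate point is passing from ``$H\setminus v$ is Hamiltonian for the optimal median $v$'' to the stated ``$H\setminus u$ is Hamiltonian''; I would handle this by observing that every median yields the same lower bound $n-1+\delta(u)$, so one is free to place the hub at $u$ in an optimal embedding, after which the rim-dilation-one conclusion produces a Hamiltonian cycle of $H\setminus u$ directly. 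If $H$ admits several inequivalent medians, this identification is exactly the step that needs care, and it is the part of the argument I would write out most explicitly.
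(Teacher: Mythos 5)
Your argument follows the same route as the paper's: the lower bound comes from summing, over the spokes, the shortest distances from the hub's image (a quantity minimized at a median) plus one per rim edge, and the ``if'' direction is the obvious construction mapping the hub to $u$ and the rim onto a Hamiltonian cycle of $H\setminus u$. Your write-up is in fact considerably more explicit than the paper's, which never isolates the spoke/rim decomposition for the lower bound and disposes of the converse in a single sentence (``if $H\setminus u$ is not hamiltonian, then the cycle in $G$ cannot be mapped onto a cycle in $H\setminus u$, a contradiction'') --- a sentence that tacitly assumes the optimal embedding places the hub at $u$.

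The one genuine issue is exactly the point you flag at the end, and your proposed repair does not close it. From an optimal embedding $f$ attaining $n-1+\delta(u)$ you may conclude only that $v=f(h)$ is \emph{some} median and that $H\setminus v$ is Hamiltonian; you are not ``free to place the hub at $u$,'' because the hypothesis gives you equality for the minimum over all embeddings, not for an embedding of your choosing. If $H$ had two medians $u$ and $v$ with $H\setminus v$ Hamiltonian but $H\setminus u$ not, then $WL(G,H)=n-1+\delta(u)$ would hold while the stated conclusion fails for $u$; so the ``only if'' direction is only safe when the median is unique, or when the conclusion is read as ``$H\setminus w$ is Hamiltonian for some median $w$.'' This is not a defect you introduced: the paper's own converse rests on the same unstated assumption, and you have located the weak point precisely where it lives. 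To submit a complete proof you would either need to add a uniqueness hypothesis on the median, restate the conclusion, or supply an argument that Hamiltonicity of $H\setminus v$ for one median forces it for all --- none of which is available in general.
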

\begin{proof}
Let $u$ be hub of $W_n$. Map $u$ in $G$ to $u$ in $H$. Since $u$ is a median of $H$, $\delta(u)=\underset{v\in V}\sum d(u,v)=\overset{k}{\underset{i=1}\sum}|N_i(u)|$, $k\leq d$. Suppose $H\setminus {u}$ is hamiltonian. Map the outer $(n-1)$-cycle in $G$ to a hamiltonian cycle in $H\setminus {u}$. Thus
$$WL(G,H)=n-1+\overset{k}{\underset{i=1}\sum}|N_i(u)|, k\leq d.$$

Conversely, suppose $WL(G,H)= n-1+\delta(u)$. If $H\setminus {u}$ is not hamiltonian, then the cycle in $G$ cannot be mapped onto
a cycle in $H\setminus {u}$, a contradiction.
\end{proof}

Proceeding in the same way, we have the following result.
\begin{thm}\label{thm2}
Let $G$ be a fan and $H$ be a graph with $u$ as a median. Then $WL(G,H)\geq n-2+\delta(u)$. Equality holds if and only if $H\setminus {u}$ contains a hamiltonian path.
\end{thm}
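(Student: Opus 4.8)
The plan is to mirror the structure of the proof of Theorem~\ref{thm1}, adapting it from the wheel to the fan. The fan $F_n$ consists of a core vertex $u$ (the analogue of the hub) joined to every vertex of a path $P_{n-1}$ on the remaining $n-1$ vertices. The core again has full degree $n-1$, so $u$ is a natural candidate for the vertex to be placed at the median of $H$. First I would map the core $u$ of $F_n$ to the median $u$ of $H$. Because $u$ is a median, the contribution of the spoke edges (the edges joining $u$ to the path vertices) to the wirelength is exactly $\delta(u)=\sum_{v\in V}d(u,v)$, independent of how the path vertices are placed. This accounts for the $\delta(u)$ term in the bound, and since the $n-2$ path edges must each contribute at least $1$ to the dilation-sum, the lower bound $WL(G,H)\ge n-2+\delta(u)$ follows immediately. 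Note the count is $n-2$ rather than $n-1$ precisely because the fan's outer structure is a path with $n-2$ edges, not a cycle with $n-1$ edges.

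For the equality, I would argue both directions. For the ``if'' direction, suppose $H\setminus u$ contains a hamiltonian path. Then I can map the path $P_{n-1}$ of $F_n$ isometrically onto this hamiltonian path, so that each of the $n-2$ path edges of $F_n$ is routed along a single edge of $H\setminus u$, contributing dilation exactly $1$ apiece. Combined with the forced $\delta(u)$ from the spokes, this realizes $WL(G,H)=n-2+\delta(u)$, matching the lower bound. For the ``only if'' direction, I would argue contrapositively: if $H\setminus u$ has no hamiltonian path, then the $(n-1)$-vertex path of $F_n$ cannot be embedded so that every one of its $n-2$ edges maps to a length-$1$ path in $H\setminus u$, since such an embedding would exhibit a hamiltonian path in $H\setminus u$. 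Hence at least one path edge has dilation $\ge 2$, forcing $WL(G,H)\ge n-1+\delta(u)>n-2+\delta(u)$, so equality fails.

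I expect the main subtlety to lie in the ``only if'' direction, just as in Theorem~\ref{thm1}. The delicate point is that the expansion-one assumption ($|V(G)|=|V(H)|$) is what makes the argument clean: since $f$ is a bijection, the $n-1$ path vertices of $F_n$ occupy exactly the $n-1$ vertices of $H\setminus u$, so a dilation-$1$ routing of all path edges is literally a hamiltonian path of $H\setminus u$. One should be careful that the spoke edges cannot be ``borrowed'' to lower the path-edge cost below $1$ each, but this is immediate since dilations of distinct guest edges sum independently in the dilation-sum formulation of wirelength given in Definition~\ref{wirelengthdefinition}. Given the explicit parallel with the wheel case, the proof is short; the phrase ``proceeding in the same way'' in the lead-in signals that the authors intend exactly this transfer of the argument, with the cycle replaced by a path and the corresponding count $n-1$ replaced by $n-2$.
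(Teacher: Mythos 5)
Your proposal is correct and matches the paper's intent exactly: the paper gives no separate proof of Theorem~\ref{thm2}, stating only ``Proceeding in the same way,'' and your argument is precisely the transfer of the proof of Theorem~\ref{thm1} with the outer cycle replaced by the path, the count $n-1$ replaced by $n-2$, and the hamiltonian cycle replaced by a hamiltonian path. Your explicit treatment of the lower bound and of the ``only if'' direction via the bijectivity of $f$ is in fact somewhat more careful than what the paper writes out.
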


\section{Concluding Remarks}
The host graphs in Theorem \ref{thm1} and Theorem \ref{thm2} cover a wide range of graphs. This has motivate us to identify interconnection networks which fall into this category:
\begin{center}
\begin{tabular}{|l|l|}
  \hline
  \hspace{3.7 cm}\textbf{Networks} & \hspace{0.3 cm}\textbf{Justification for 1-fault Tolerance} \\ \hline
  Circulant graphs $G(n;\pm S), \{1,2\}\subseteq S \subseteq \{1,2,\ldots,\lfloor\frac{n}{2}\rfloor\}$ & 1-fault hamiltonian \cite{Xu13} \\ \hline
  Generalized Petersen graphs $P(n,m)$ & hypohamiltonian$/$hyperhamiltonian \cite{Xu13,MaWaHs08} \\  \hline
  Augmented cubes $AQ_n$ & pancyclic \cite{ChSu02} \\ \hline
  Crossed cubes $CQ_n$ & almost pancyclic \cite{Ef91} \\ \hline
  M\"{o}bius cubes $MQ_n$ & $(n-2)$-fault almost pancyclic \cite{PaLiKi07,CuLa95} \\ \hline
  Twisted cubes $TQ_n$ & $(n-2)$-fault almost pancyclic \cite{PaLiKi07,FaJiLi07,FaJiLi08} \\ \hline
  Twisted $n$-cubes $T_nQ$ & 1-fault hamiltonian \cite{PaKiLi05} \\ \hline
  Locally twisted cubes $LTQ_n$ & almost pancyclic \cite{YaMeEv04} \\ \hline
  Generalized twisted cubes $GQ_n$ & $(n-2)$-fault almost pancyclic \cite{PaLiKi07} \\ \hline
  Odd dimensional cube connected cycle $CCC_n$ & 1-fault hamiltonian \cite{YaMeEv04} \\ \hline
  Hierarchical cubic networks $HCN(n)$ & almost pancyclic \cite{GhDe95} \\ \hline
  Alternating group graphs $AG_n$ & $(n-2)$-fault hamiltonian \cite{ChYaWaCh04} \\ \hline
  Arrangement graphs $A_{n,k}$ & pancyclic \cite{DaTr92} \\ \hline
  3-regular planar hamiltonian graphs & 1-fault hamiltonian \cite{WaHuHs98} \\ \hline
  $(n,k)$-star graphs $S_{n,k}$ & at most $(n-3)$-fault hamiltonian \cite{HsHsTaHs03} \\ \hline
   Generalised matching network $GMN$ & $(f+2)$-fault Hamiltonian \cite{DoYaZh09} \\ \hline
    Fully connected cubic networks $FCCN_n$ & 1-fault hamiltonian \cite{HoLi09} \\ \hline
      Tori $T(d_1,d_2,\ldots,d_n)$ & fault hamiltonian \cite{KiPa00,KiPa01} \\ \hline
  1-fault traceable graphs & 2-fault hamiltonian [Lemma \ref{lem1}] \\
  \hline
\end{tabular}
\end{center}

\section*{Acknowledgment}
The work of R. Sundara Rajan was partially supported by Project no. ECR/2016/1993, Science and Engineering Research Board (SERB), Department of Science and Technology (DST), Government of India. Further, we thank Prof. N. Parthiban, School of Computing Sciences and Engineering, SRM Institute of Science and Technology, Chennai, India for his fruitful suggestions.

\end{document}